\title[Global secondary CR invariants in dimension five]
{Global secondary CR invariants in dimension five}
\author{TAIJI MARUGAME}
\date{}
\renewcommand\a{\alpha}
\renewcommand\b{\beta}
\newcommand\g{\gamma}
\renewcommand\d{\delta}
\renewcommand\r{\rho}
\renewcommand\th{\theta}
\newcommand\U{\Upsilon}
\newcommand\pa{\partial}
\newcommand\ol{\overline}
\newcommand{\calE}{\mathcal{E}}
\newtheorem{lem}{Lemma}[section]
\newtheorem{thm}[lem]{Theorem}
\newtheorem{prop}[lem]{Proposition}
\theoremstyle{definition}
\numberwithin{equation}{section}
\address{Mathematical Analysis Team, RIKEN Center for Advanced Intelligence Project (AIP), 1-4-1 Nihonbashi, Chuo-ku, Tokyo 103-0027, Japan}
\address{Department of Mathematics, Graduate School of Science, Osaka University, 1-1 Machikaneyama-cho Toyonaka Osaka 560-0043, Japan}
\email{taiji.marugame@riken.jp}
\keywords{CR manifolds; CR invariants; $Q'$-curvature; $\mathcal{I}'$-curvature} 
\subjclass[2010]{Primary~32V05, Secondary~53A55}
\begin{document}

\begin{abstract} 
A global secondary CR invariant is defined as the integral of a pseudo-hermitian invariant which is independent of a choice of pseudo-Einstein contact form. We prove that any global secondary CR invariant on CR five-manifolds is a linear combination of the total $Q'$-curvature, the total $\mathcal{I}'$-curvature, and the integral of a local CR invariant.
\end{abstract}
\maketitle

%%%%%%%%%%%%%%%%%%%%%%%%%%%%%%%%%%%%%%%%%%%%%%%%%%%%%%%%%%%%%%%
\section{Introduction}
Construction of invariants of CR structure is a fundamental problem in CR geometry. Of particular importance are global CR invariants which are given as the integral of pseudo-hermitian invariants, i.e., Tanaka--Webster curvature quantities, since one can calculate them by local geometric data of CR manifolds. Recently, there has been much progress in construction of such invariants defined for a restricted class of contact forms, namely pseudo-Einstein contact forms. A {\it global secondary CR invariant} is the integral of a pseudo-hermitian invariant which is defined for any pseudo-Einstein contact form $\theta$ and is invariant under changes of $\theta$. Such a class of invariants is natural in relation to complex geometry and easier to construct by using techniques of the ambient metric or Cheng--Yau's K\"ahler-Einstein filling. 

An example of global secondary CR invariant is the {\it Burns--Epstein invariant} introduced by Burns--Epstein \cite{BE1, BE2} in dimension three, and generalized by the author \cite{Mar1} to higher dimensions as the boundary term of the renormalized Chern--Gauss--Bonnet formula for strictly pseudoconvex domains. Another example is the {\it total $Q'$-curvature} introduced by Case--Yang \cite{CY} in dimension three and generalized to higher dimensions by Hirachi \cite{H}. The $Q'$-curvature is a pseudo-hermitian invariant 
whose transformation formula is described by linear differential operators $P, P'$ and the CR invariance of its integral follows from the formal self-adjointness of these operators. In dimension three, these two CR invariants coincide up to a universal constant multiple; in fact, up to universal constant multiples, the Burns--Epstein invariant is the only global secondary CR invariant of three dimensional CR manifolds; see \cite{H}.

In dimension five, Case--Gover \cite{CG} derived an explicit formula of the $Q'$-curvature by using the CR tractor calculus. Modulo divergence terms, it reads
\begin{equation}\label{Q-prime}
Q'\equiv 2P^3-2P|A_{\a\b}|^2-S_{\a\ol\b\g\ol\mu}A^{\a\g}A^{\ol\b\ol\mu}-|V_{\a\ol\b\g}|^2.
\end{equation}
Moreover, they defined a pseudo-hermitian invariant, called the {\it $\mathcal{I}'$-curvature}, whose integral gives the difference between the total $Q'$-curvature and the Burns--Epstein invariant:
\begin{equation}\label{I-prime}
\mathcal{I}'=\frac{1}{2}P|S_{\a\ol\b\g\ol\mu}|^2+|V_{\a\ol\b\g}|^2+\frac{1}{8}\Delta_b |S_{\a\ol\b\g\ol\mu}|^2.
\end{equation}
The transformation formula of $\mathcal{I}'$ is described in terms of a CR invariant representative of the second Chern class of the CR tangent bundle, whose vanishing explains the CR invariance of the total $\mathcal{I}'$-curvature.

Hirachi \cite{H} conjectured that a pseudo-hermitian invariant defined for pseudo-Einstein contact forms is decomposed into the sum of a multiple of $Q'$, local CR invariants, and divergence terms if it integrates to a global secondary CR invariant. However, Reiter--Son \cite{RS} showed that the total $\mathcal{I}'$-curvature gives a counterexample to Hirachi's conjecture. The $\mathcal{I}'$-curvature was generalized to higher dimensions independently by the author \cite{Mar2} and Case--Takeuchi \cite{CT}. Case--Takeuchi also proved that the $\mathcal{I}'$-curvatures give counterexamples to Hirachi's conjecture in general dimensions. 

In this paper, we prove that the $\mathcal{I}'$-curvature is essentially the only counterexample to Hirachi's conjecture in dimension five: 

\begin{thm}\label{main-thm}
Let $F_\theta\in\calE(-3, -3)$ be a pseudo-hermitian invariant on CR five-manifolds defined for pseudo-Einstein contact forms. If the integral of $F_\theta$ is independent of the choice of pseudo-Einstein contact form, then there exist constants $c_1, c_2, c_3\in\mathbb{C}$ such that
\[
F_\theta\equiv c_1Q'+c_2\mathcal{I}'+c_3S_\a{}^\b{}_\g{}^\mu S_\b{}^\nu{}_\mu{}^\tau S_\nu{}^\a{}_\tau{}^\g
\]
modulo divergence terms. 
\end{thm}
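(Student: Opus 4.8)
The plan is to reduce the statement to a finite-dimensional problem in linear algebra over the space $\mathcal{W}$ of weight $(-3,-3)$ pseudo-hermitian invariants taken modulo divergence terms. Fixing one pseudo-Einstein contact form $\theta$, every other such form is $\hat\theta=e^{\U}\theta$ for a CR pluriharmonic function $\U$. Writing $F_\theta\equiv\sum_i c_iW_i$ for a spanning set $\{W_i\}$ of $\mathcal{W}$, the hypothesis that $\int_M F_\theta$ is independent of $\theta$ is equivalent to the vanishing of the first variation $\frac{d}{dt}\big|_{t=0}\int_M F_{e^{t\U}\theta}$ for every CR five-manifold and every CR pluriharmonic $\U$. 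This recasts the problem as determining exactly which coefficient vectors $(c_i)$ yield a CR-invariant integral.

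First I would enumerate a basis of $\mathcal{W}$. In dimension five the relevant Tanaka--Webster building blocks are the scalar $P$, the torsion $A_{\a\b}$, the Chern--Moser tensor $S_{\a\ol\b\g\ol\mu}$, the tensor $V_{\a\ol\b\g}$, and their covariant derivatives; listing all complete contractions of total weight $(-3,-3)$ and reducing modulo the CR Bianchi identities and integration by parts produces a finite basis. For each basis element I would then compute, to first order in $\U$, its transformation under $\hat\theta=e^{\U}\theta$. Because $\U$ is constrained to be CR pluriharmonic, only the pluriharmonic part of its jets can enter, which markedly shortens the list of $\U$-dependent terms that survive.

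The invariance condition is analysed next. The first-variation integrand is itself a universal pseudo-hermitian invariant, bilinear in the curvature jets and the jets of $\U$; requiring that its integral vanish for every manifold and every pluriharmonic $\U$ reduces, after repeated integration by parts and imposition of the pluriharmonicity of $\U$, to a finite system of linear equations on the coefficients $c_i$. I would solve this system and show that its solution space is three-dimensional. Two independent solutions are supplied by $Q'$ and $\mathcal{I}'$, whose integrals are CR-invariant by the results recalled in the introduction; a third is the coefficient vector of any local CR invariant, since the integral of a local CR invariant is automatically independent of $\theta$. As CR pluriharmonic functions form the kernel of a CR-invariant operator, the required orthogonality is exactly what distinguishes these solutions. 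Modulo divergence the only local CR invariant of this weight is the cubic contraction $S_\a{}^\b{}_\g{}^\mu S_\b{}^\nu{}_\mu{}^\tau S_\nu{}^\a{}_\tau{}^\g$, so these three families exhaust the solutions and give the claimed decomposition with some $c_1,c_2,c_3\in\C$.

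The main obstacle is twofold. The larger practical difficulty is the bookkeeping: producing a correct, non-redundant basis of $\mathcal{W}$ together with the exact first-order transformation formulas, since the CR Bianchi identities and the pluriharmonicity constraint interact in delicate ways and any error propagates through the linear system. The more conceptual difficulty is to establish that the solution space has dimension exactly three---ruling out further independent solutions requires a sharp classification of the local CR invariants of weight $(-3,-3)$ modulo divergence, together with a verification that $Q'$, $\mathcal{I}'$, and the cubic $S$-invariant are linearly independent in $\mathcal{W}$.
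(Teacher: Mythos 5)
Your first step coincides with the paper's: Proposition 3.1 of the paper is exactly the enumeration you describe, reducing any pseudo-hermitian invariant in $\calE(-3,-3)$ defined for pseudo-Einstein contact forms, modulo divergences, to the span of $P^3$, $P|A_{\a\b}|^2$, $P|S_{\a\ol\b\g\ol\mu}|^2$, $P\Delta_b P$, $|V_{\a\ol\b\g}|^2$, $S_{\a\ol\b\g\ol\mu}A^{\a\g}A^{\ol\b\ol\mu}$, and the cubic $S$-contraction. After subtracting multiples of $Q'$ and $\mathcal{I}'$ to eliminate $P^3$ and $|V_{\a\ol\b\g}|^2$, the theorem reduces to showing that no nonzero combination of the four remaining ``bad'' terms $P|A_{\a\b}|^2$, $P\Delta_b P$, $S_{\a\ol\b\g\ol\mu}A^{\a\g}A^{\ol\b\ol\mu}$, $P|S_{\a\ol\b\g\ol\mu}|^2$ integrates to a global secondary CR invariant.

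The gap is that you never give a workable mechanism for this last, decisive step. You recast invariance as the vanishing of the first variation for all manifolds and all CR pluriharmonic $\U$, and say you would ``solve the resulting linear system'' and ``show that its solution space is three-dimensional''; but deciding whether a universal first-variation expression --- polynomial in the curvature jets and linear in the jets of a constrained (pluriharmonic) function --- vanishes identically modulo divergence is precisely the hard content of the theorem, not a routine linear-algebra computation: there is no finite list of equations on the $c_i$ until you produce concrete test data. The paper resolves this by evaluating on two explicit CR manifolds. On the round sphere $S^5$ (where $S_{\a\ol\b\g\ol\mu}=0$, $A_{\a\b}=0$, $P=1$) the \emph{second} variation along $e^{\varepsilon\U}\theta$ with $\U=\frac{1}{m}\bigl((z^1)^m+(z^{\ol1})^m\bigr)$ yields $3c_1+2(m-1)c_2=0$ for all $m\ge 2$, forcing $c_1=c_2=0$; on the Reinhardt hypersurface $\sum_i(\log|\zeta^i|)^2=1$ the \emph{fourth} and second variations along $\U=2x^1$ force $c_3=c_4=0$. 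Note also that your plan of using only first variations would produce no information at such computable backgrounds: at the sphere the first variation of every bad term vanishes identically because $A_{\a\b}$ and $S_{\a\ol\b\g\ol\mu}$ vanish there, which is exactly why the paper differentiates the one-parameter family to second and fourth order. Without either these example computations or a substitute argument, your proposal identifies the difficulty (as your final paragraph concedes) but does not overcome it.
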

Note that the last term in the above expression is a nontrivial local CR invariant; see \S\ref{reinhardt}. The integrals of each term are linearly independent by the argument in \cite{RS}. We prove this theorem by listing up all pseudo-hermitian invariants $F_\theta\in\calE(-3, -3)$ for pseudo-Einstein contact forms up to divergence terms (see Proposition \ref{list}). Considerations of specific examples of CR manifolds (the sphere and a Reinhardt hypersurface) give the restriction to $F_\theta$ as above if it integrates to a global secondary CR invariant.

\bigskip\noindent {\bf Acknowledgment} The author is grateful to Kengo Hirachi for posing the problem of classifying global secondary CR invariants in dimension five. He also thanks Yoshihiko Matsumoto and Yuya Takeuchi for comments and discussions.

%%%%%%%%%%%%%%%%%%%%%%%%%%%%%%%%%%%%%%%%%%%%%%%%%%%%%%%%%%%%%%%%
\section{Preliminaries}
\subsection{Pseudo-hermitian geometry of CR manifolds}
Let $(M, H, J)$ be a strictly pseudoconvex CR manifold of dimension $2n+1\ge 5$. Namely, $H\subset TM$ is a contact distribution and $J\in\Gamma(\mbox{End}\ H)$ is an almost complex structure with the integrability condition $[\Gamma(T^{1, 0}M), \Gamma(T^{1, 0}M)]\subset \Gamma(T^{1, 0}M)$. Here $T^{1,0}M\subset\mathbb{C}\otimes H$ denotes the eigenspace bundle for $J$ with eigenvalue $i$. The strict pseudoconvexity means that the Levi form 
$h_\theta(X, Y):=d\theta(X, JY)$ gives a positive definite hermitian form on $H$ for any (positive) contact form $\theta$. 

We fix a contact form $\theta$ and take a local frame $\{Z_0:=T, Z_\a, Z_{\ol\a}:=\ol{Z_\a}\}$ for $\mathbb{C}\otimes TM$ adapted to the decomposition
\[
\mathbb{C}\otimes TM=\mathbb{C}T\oplus T^{1,0}M\oplus \ol{T^{1, 0}M}, 
\]
where $T$ is the Reeb vector field of $\theta$: $\theta(T)=1,\ T\lrcorner\, d\theta=0$. Such a frame is called an {\it admissible frame}. The dual coframe is denoted by $\{\theta^0=\theta, \theta^\a, \theta^{\ol\a}\}$. In this frame, we have 
\[
d\theta=ih_{\a\ol\b}\theta^\a\wedge\theta^{\ol\b},
\]
where $h_{\a\ol\b}:=h_\theta(Z_\a, Z_{\ol\b})$.

The {\it Tanaka--Webster connection} $\nabla$ associated to $\theta$ is a linear connection on $TM$ defined by $\nabla T=0,\ \nabla Z_\a=\omega_\a{}^\b\otimes Z_\b$, where the connection form $\omega_\a{}^\b$ is uniquely characterized by the structure equations
\begin{align*}
d\theta^\a&=\theta^\b\wedge\omega_\b{}^\a+A^\a{}_{\ol\b}\,\theta\wedge\theta^{\ol\b}, \\
dh_{\a\ol\b}&=\omega_\a{}^\g h_{\g\ol\b}+h_{\a\ol\g}\omega_{\ol\b}{}^{\ol\g},
\end{align*}
where $\omega_{\ol\b}{}^{\ol\g}=\ol{\omega_{\b}{}^{\g}}$. Note that the second equation means that $\nabla$ preserves the Levi form, and hence the covariant differentiation commutes with lowering or raising the indices by the Levi form $h_{\a\ol\b}$ or its inverse $h^{\a\ol\b}$. The tensor $A^\a{}_{\ol\b}$ is a component of the torsion tensor of $\nabla$ and is called the {\it Tanaka--Webster torsion tensor}. The tensor $A_{\a\b}:=\ol{A_{\ol\a\ol\b}}$ is symmetric: 
$A_{\a\b}=A_{\b\a}$. 
%%Moreover, the covariant derivative $\nabla_\a A_{\b\g}$ is totally symmetric (see \cite{Lee}):\begin{equation}\label{bianchi-A}\nabla_{\a} A_{\b\g}=\nabla_{\b}A_{\a\g}.\end{equation}
The curvature form $\Omega_\a{}^\b=d\omega_\a{}^\b-\omega_\a{}^\g\wedge\omega_\g{}^\b$ can be written as
\begin{align*}
R_\a{}^\b{}_{\g\ol\mu}\theta^\g\wedge\theta^{\ol\mu}+\nabla^\b A_{\a\g}\theta^\g\wedge\theta-\nabla_\a A^\b{}_{\ol\g}\theta^{\ol\g}\wedge\theta 
-iA_{\a\g}\theta^\g\wedge\theta^\b+ih_{\a\ol\g}A^\b{}_{\ol\mu}\theta^{\ol\g}\wedge\theta^{\ol\mu}.
\end{align*}
The tensor $R_\a{}^\b{}_{\g\ol\mu}$ is called the {\it Tanaka--Webster curvature tensor} and it has the following symmetries:
\begin{equation}\label{symm-R}
R_{\a\ol\b\g\ol\mu}=R_{\g\ol\b\a\ol\mu}=R_{\a\ol\mu\g\ol\b}, \quad 
\ol{R_{\a\ol\b\g\ol\mu}}=R_{\b\ol\a\mu\ol\g}.
\end{equation}
We also define the {\it Tanaka--Webster Ricci tensor} and the {\it Tanaka--Webster scalar curvature} by the contractions:
\[
R_{\a\ol\b}:=R_\g{}^\g{}_{\a\ol\b}, \quad R:=R_{\a}{}^\a{}_\b{}^\b. 
\]

A {\it (scalar) pseudo-hermitian invariant} is a function defined by a universal formula as a linear combination of complete contractions of the Tanaka--Webster curvature tensor, the Tanaka--Webster torsion tensor, and their covariant derivatives. Note that pseudo-hermitian invariants are relative to a choice of contact form. We usually put CR weights on these invariants and regard them as a CR density; see \S\ref{weights} below.

We will introduce some important tensors on CR manifolds. The {\it Chern--Moser tensor} is defined by
\[
S_{\a\ol\b\g\ol\mu}:=R_{\a\ol\b\g\ol\mu}-P_{\a\ol\b}h_{\g\ol\mu}
-P_{\g\ol\b}h_{\a\ol\mu}-P_{\g\ol\mu}h_{\a\ol\b}-P_{\a\ol\mu}h_{\g\ol\b},
\]
where
\[
P_{\a\ol\b}:=\frac{1}{n+2}\Bigl(R_{\a\ol\b}-\frac{R}{2(n+1)}h_{\a\ol\b}\Bigr)
\]
is the {\it CR Schouten tensor}. The Chern--Moser tensor is trace-free and has the same symmetries as in \eqref{symm-R}. Moreover, it is CR invariant in the sense that $\widehat S_\a{}^\b{}_{\g\ol\mu}=S_\a{}^\b{}_{\g\ol\mu}$ holds for any rescaling of the contact form. We set $P:=P_\a{}^\a=\frac{1}{2(n+1)}R$ and define
\begin{align*}
T_\a&:=\frac{1}{n+2}(\nabla_\a P-i\nabla^\b A_{\a\b}), \\
V_{\a\ol\b\g}&:=\nabla_{\ol\b}A_{\a\g}+i\nabla_\g P_{\a\ol\b}-iT_\g h_{\a\ol\b}-2i
T_\a h_{\g\ol\b}.
\end{align*}
The tensor $V_{\a\ol\b\g}$ is related to $S_{\a\ol\b\g\ol\mu}$ by the following equation 
({\cite[Lemma 2.2]{CT}}):
\begin{equation}\label{V-S}
\nabla_{[\a}S_{\b]}{}^\g{}_\mu{}^\nu=i V_\mu{}^\nu{}_{[\a}\d_{\b]}{}^\g+
i V_\mu{}^\g{}_{[\a}\d_{\b]}{}^\nu.
\end{equation}
In particular, we have
\begin{equation}\label{div-S}
\nabla^{\ol\mu}S_{\a\ol\b\g\ol\mu}=-inV_{\a\ol\b\g}.
\end{equation}
Thus, $V_{\a\ol\b\g}$ is trace-free and satisfies $V_{\a\ol\b\g}=V_{\g\ol\b\a}$.

\subsection{Pseudo-Einstein contact forms}
A contact form is called {\it pseudo-Einstein} if it satisfies
\[
R_{\a\ol\b}=\frac{R}{n}h_{\a\ol\b}.
\]
A complex function $f$ on $M$ is called a {\it CR function} if $\ol Zf=0$ for all 
$Z\in T^{1, 0}M$, and a real function is called a {\it CR pluriharmonic function} when it is locally the real part of a CR function.  If $\theta$ is a pseudo-Einstein contact form, then $e^\U\theta$ is pseudo-Einstein if and only if $\U$ is CR pluriharmonic. 

For a pseudo-Einstein contact form, the Chern--Moser tensor is represented as
\begin{equation}\label{S-pE}
S_{\a\ol\b\g\ol\mu}=R_{\a\ol\b\g\ol\mu}-\frac{2}{n}P(h_{\a\ol\b}h_{\g\ol\mu}+h_{\g\ol\b}h_{\a\ol\mu}).
\end{equation}
Moreover, the Bianchi identity $\nabla_\a R-\nabla^{\ol\b}R_{\a\ol\b}=i(n-1)\nabla^\b A_{\a\b}$ ({\cite[Lemma 2.2]{Lee}}) gives
\begin{equation}\label{A-div}
\nabla^\b A_{\a\b}=-\frac{2(n+1)i}{n}\nabla_\a P,
\end{equation}
and hence
\begin{equation}\label{V-pE}
V_{\a\ol\b\g}=\nabla_{\ol\b}A_{\a\g}+\frac{2i}{n}\Bigl((\nabla_\a P)h_{\g\ol\b}+(\nabla_\g P)h_{\a\ol\b}\Bigr).
\end{equation}
We also need the following identity for a pseudo-Einstein contact form on five dimensional CR manifolds:
\begin{equation}\label{A-0}
i(\nabla_0 A_{\a\b})A^{\a\b}= -\frac{1}{8}R\Delta_b R+|\nabla_{\ol\g}A_{\a\b}|^2+R_{\a\ol\b\g\ol\mu}A^{\a\g}A^{\ol\b\ol\mu}-\frac{1}{2}R|A_{\a\b}|^2+(\mbox{div}),
\end{equation}
where (div) denotes divergence terms and $\Delta_b=-\nabla_\a\nabla^\a-\nabla^\a\nabla_\a$ is the {\it sublaplacian}; see {\cite[Proposition 6.3]{HMM}} for the proof. Note that our sign convention of $\Delta_b$ is opposite to that in \cite{HMM}.

\subsection{CR weights and transformation formulas}\label{weights}
When we consider transformation formulas of pseudo-hermitian tensors under rescaling of the contact form, it is convenient to put appropriate CR weights on them. 

Let 
\[
K_M:=\{\zeta\in\mathbb{C}\otimes\wedge^{n+1}T^*M\ |\ \ol Z\lrcorner\,\zeta=0, \forall Z\in T^{1,0}M\}
\]
be the {\it CR canonical bundle}, and define the {\it CR density of weight} $(m, m), m\in\mathbb{Z}$ by
\[
\calE(m, m):=(K_M\otimes \ol{K}_M)^{-\frac{m}{n+2}}.
\]
We also call a section of this bundle a {\it CR density}. A choice of contact form $\theta$ determines a CR density $\tau_\theta:=|\zeta|^{-2/(n+2)}\in\calE(1, 1)$, where $\zeta\in K_M$ satisfies   
\[
\theta\wedge(d\theta)^n=i^{n^2}n!\theta\wedge(T\lrcorner\,\zeta)\wedge
(T\lrcorner\,\ol\zeta);
\]
such $\zeta$ is unique up to multiplications of $U(1)$-valued functions, so $\tau_\theta$ is well-defined. For a rescaling $\widehat\theta=e^\U\theta$, we have $\tau_{\widehat\theta}=e^{-\U}\tau_\theta.$ If a pseudo-hermitian tensor 
$B$ satisfies $\widehat B=e^{m\lambda}B$ for rescaling $\widehat\theta=e^{\lambda}\theta$ by any constant $\lambda\in\mathbb{R}$, then we put CR weight $(m, m)$ by replacing $B$ by $\tau_\theta^{\otimes m}B$. The resulting weighted tensor is invariant under rescaling of $\theta$ by constant functions. For example, as weighted tensors, the Levi form $h_{\a\ol\b}$ has CR weight $(1, 1)$ while the inverse $h^{\a\ol\b}$ has weight $(-1, -1)$. Thus, raising and lowering indices change the CR weight. The curvature tensor $R_{\a}{}^\b{}_{\g\ol\mu}$ and the torsion tensor $A_{\a\b}$ have weight $(0, 0)$, and the scalar curvature $R$ has weight $(-1, -1)$.  Putting these weights, we have the following transformation formulas for rescaling $\widehat\theta=e^\U\theta$:
\begin{align}
\widehat A_{\a\b}&=A_{\a\b}+i\U_{\a\b}-i\U_\a\U_\b, \label{A-trans} \\
\widehat P&=P+\frac{1}{2}\Delta_b\U-\frac{n}{2}\U_\a \U^\a, \label{P-trans}
\end{align}
where $\U_\a:=\nabla_\a \U$ and $\U_{\a\b}:=\nabla_\b\nabla_\a\U$. For a CR density $f\in\calE(m, m)$, the transformation formula of the sublaplacian is given by
\begin{equation}\label{sublap-trans}
\widehat\Delta_b f=\Delta_b f-(n+2m)(\U^\a\nabla_\a f+\U_\a\nabla^\a f)+m(\Delta_b\U)f
-2m(n+m)\U_\a \U^\a f.
\end{equation}

For a CR density $f\in\calE(-n-1, -n-1)$, we can define the integral 
\[
\int_M f:=\int_M (\tau_\theta^{n+1}f )\,\theta\wedge(d\theta)^n, 
\]
which is independent of choice of $\theta$. If the integral of a pseudo-hermitian invariant $F_\theta\in\calE(-n-1, -n-1)$ is independent of choice of a pseudo-Einstein contact form, it is called a {\it global secondary CR invariant}.

%%%%%%%%%%%%%%%%%%%%%%%%%%%%%%%%%%%%%%%%%%%%%%%%%%%%%%%%%%
\subsection{The Graham--Lee connection} To compute explicit formulas of the Tanaka--Webster connection for the boundary of a strictly pseudoconvex domain, it is useful to introduce an extended connection, called the {\it Graham--Lee connection}, defined on a neighborhood of the boundary (\cite{GL}).

Let $X$ be an $(n+1)$-dimensional complex manifold, and $\Omega\subset X$ a bounded domain with strictly pseudoconvex boundary $M$. A real function $\rho\in C^\infty(X)$ is called a {\it (boundary) defining function} if $\Omega=\rho^{-1}((0, \infty))$ and $d\rho\neq0$ on $M$. The induced CR structure on $M$ is given by the subbundle ${\rm Ker}\, \partial\rho\subset \mathbb{C}\otimes TM$. The Levi form for the contact form $\theta=(i/2)(\pa\rho-\ol\pa\rho)|_{TM}$ is the restriction of $-\partial\ol\partial\rho$. 

We take a $(1, 0)$-vector field $\xi$ on a neighborhood of $M$ which satisfies 
\begin{equation}\label{xi-def}
\xi\rho=1, \quad -\xi\lrcorner\, \pa\ol\pa\rho=\kappa\ol\pa\rho
\end{equation}
with a real function $\kappa$ called the {\it transverse curvature}. Then we have the decomposition 
\[
T^{1, 0}X={\rm Ker}\, \pa\rho\oplus \mathbb{C}\xi
\]
near $M$. A local $(1, 0)$-frame $\{Z_\a, \xi\}$ adapted to this decomposition is called a {\it Graham--Lee frame}. In this frame, we can write as
\[
-\pa\ol\pa\rho=h_{\a\ol\b}\theta^\a\wedge\theta^{\ol\b}+\kappa\pa\rho\wedge\ol\pa\rho,
\]
where $\{\theta^\a, \pa\rho\}$ is the dual frame of $\{Z_\a, \xi \}$ and $h_{\a\ol\b}=-\pa\ol\pa\rho(Z_\a, Z_{\ol\b})$. Note that at the boundary, $h_{\a\ol\b}$ agrees with the Levi form for $\theta$. 

The Graham--Lee connection $\nabla$ for $\rho$ is a linear connection of $TX$ defined in a neighborhood of $M$. It preserves ${\rm Ker}\,\pa\rho$
 and satisfies $\nabla\xi=0$. The connection 1- forms $\varphi_\a{}^\b$ with respect to $\{Z_\a\}$ are characterized by the structure equations:
\begin{equation}\label{str-GL}
\begin{aligned}
d\theta^\a&=\theta^\b\wedge\varphi_\b{}^\a+iA^\a{}_{\ol\b}\pa\rho\wedge\theta^{\ol\b}-\kappa^\a\pa\rho\wedge\ol\pa\rho-\frac{1}{2}\kappa d\rho\wedge\theta^\a, \\
dh_{\a\ol\b}&=\varphi_\a{}^\g h_{\g\ol\b}+h_{\a\ol\g}\varphi_{\ol\b}{}^{\ol\g},
\end{aligned}
\end{equation}
where $\varphi_{\ol\b}{}^{\ol\g}:=\ol{\varphi_\b{}^\g}$ and the index in $\kappa^\a$ is raised by the inverse $h^{\a\ol\b}$ of $h_{\a\ol\b}$. We observe that these structure equations restrict to those of the Tanaka--Webster connection at the boundary.  Thus we have $\varphi_\a{}^\b|_{TM}=\omega_\a{}^\b$ and the restriction of $A^\a{}_{\ol\b}$ coincides with the Tanaka--Webster torsion tensor.

%%%%%%%%%%%%%%%%%%%%%%%%%%%%%%%%%%%%%%%%%%%%%%%%%%%%%%%%%%%55
\section{Proof of Theorem \ref{main-thm}}
\subsection{Pseudo-hermitian invariants of weight $(-3,-3)$}
To prove Theorem \ref{main-thm}, we first list up all pseudo-hermitian invariants $F_\theta\in\calE(-3, -3)$ which may appear in the integrand of a global secondary CR invariant of CR five-manifolds:
\begin{prop}\label{list}
Let $F_\theta\in\calE(-3, -3)$ be a pseudo-hermitian invariant defined for pseudo-Einstein contact forms on five dimensional CR manifolds. Then it is a linear combination of 
\begin{equation*}
\begin{gathered}
P^3, \quad P|A_{\a\b}|^2, \quad P|S_{\a\ol\b\g\ol\mu}|^2, \quad P\Delta_b P, \quad |V_{\a\ol\b\g}|^2, \\
S_{\a\ol\b\g\ol\mu}A^{\a\g}A^{\ol\b\ol\mu}, \quad
S_\a{}^\b{}_\g{}^\mu S_\b{}^\nu{}_\mu{}^\tau S_\nu{}^\a{}_\tau{}^\g
\end{gathered}
\end{equation*}
modulo divergence terms.
\end{prop}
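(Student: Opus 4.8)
The plan is to treat the problem as a finite enumeration graded by homogeneity. Under a constant rescaling $\widehat\theta=\lambda\theta$ the tensors $P$, $A_{\a\b}$ and $S_{\a\ol\b\g\ol\mu}$ carry weight one (counting $P$ and $R$ as $(-1,-1)$), each horizontal derivative $\nabla_\a,\nabla_{\ol\a}$ contributes weight one half, and $\nabla_0=\nabla_T$ contributes weight one; since $F_\theta\in\calE(-3,-3)$, every complete contraction appearing in it must have total weight three. First I would shrink the list of primitive building blocks using the pseudo-Einstein hypothesis: by \eqref{S-pE} the curvature is determined by $P$ and $S_{\a\ol\b\g\ol\mu}$ alone (the trace-free Ricci part vanishes), by \eqref{A-div} every divergence $\nabla^\b A_{\a\b}$ is a multiple of $\nabla_\a P$ so that $T_\a=-\tfrac1n\nabla_\a P$, and then \eqref{V-pE} expresses $V_{\a\ol\b\g}$ through $\nabla_{\ol\b}A_{\a\g}$ and $\nabla P$. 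Thus a weight-three contraction is, up to these substitutions, a product of $k$ factors drawn from $\{P,A_{\a\b},S_{\a\ol\b\g\ol\mu}\}$ together with covariant derivatives of total weight $3-k$, with all indices contracted; only $k=1,2,3$ can occur.

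For $k=3$ (no derivatives) I would list the cubic complete contractions directly. Matching holomorphic and antiholomorphic indices leaves only $P^3$, $P|A_{\a\b}|^2$, $P|S_{\a\ol\b\g\ol\mu}|^2$, $S_{\a\ol\b\g\ol\mu}A^{\a\g}A^{\ol\b\ol\mu}$ and the cubic contractions of three copies of $S$; a purely cubic torsion term is impossible, since an odd number of $A$-factors cannot form a Hermitian scalar. The delicate point here is to show that, in the rank-two setting $n=2$, there is a single independent cubic invariant of $S_{\a\ol\b\g\ol\mu}$, namely $S_\a{}^\b{}_\g{}^\mu S_\b{}^\nu{}_\mu{}^\tau S_\nu{}^\a{}_\tau{}^\g$; viewing $S$ as a trace-free Hermitian endomorphism of $\mathrm{Sym}^2T^{1,0}M$ (a rank-three bundle) reduces all cubic traces to this one modulo the identities special to two holomorphic indices.

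For $k=2$ (derivative weight one) the terms carry either one $\nabla_0$ or two horizontal derivatives distributed over two curvature or torsion factors. Integration by parts removes a $\nabla_0$ acting alone (any $\nabla_0 f$ integrates to zero and may be absorbed into the divergence terms) and converts $(\nabla_\a P)(\nabla^\a P)$-type terms into $P\Delta_b P$ modulo divergence; the contractions of $\nabla S$ are reduced by \eqref{div-S} and \eqref{V-S} to $|V_{\a\ol\b\g}|^2$ and cubic terms, while the $\nabla A$ contractions are handled through \eqref{V-pE}, so that $|\nabla_{\ol\g}A_{\a\b}|^2$ and the like are expressed in terms of $|V_{\a\ol\b\g}|^2$, $P\Delta_b P$ and $k=3$ terms. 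The identity \eqref{A-0} is decisive: it disposes of the genuinely new term $i(\nabla_0 A_{\a\b})A^{\a\b}$ by rewriting it, modulo divergence, through $R\Delta_b R$, $|\nabla_{\ol\g}A_{\a\b}|^2$, $R_{\a\ol\b\g\ol\mu}A^{\a\g}A^{\ol\b\ol\mu}$ and $R|A_{\a\b}|^2$, all of which are already on the list after using \eqref{S-pE}. The outcome is that every $k=2$ contraction lies, modulo divergences, in the span of $P\Delta_b P$, $|V_{\a\ol\b\g}|^2$ and the $k=3$ terms.

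For $k=1$ (derivative weight two) a single factor carries four horizontal derivatives, or two horizontal derivatives and one $\nabla_0$, or two $\nabla_0$'s. A single factor of $A_{\a\b}$ or $S_{\a\ol\b\g\ol\mu}$ cannot be closed into a Hermitian scalar of this weight without pairing against a conjugate, so the only candidates reduce to iterated Laplacian and Reeb expressions on $P$, such as $\Delta_b^2 P$ and $\nabla_0^2 P$, which are manifest divergences; commuting derivatives into this form produces only lower-order curvature corrections that feed back into the $k=2$ and $k=3$ cases already treated. Assembling the three cases yields precisely the seven listed generators modulo divergences. I expect the main obstacle to be twofold: guaranteeing completeness of the enumeration while controlling the numerous integration-by-parts and curvature-commutator reductions in the $k=2$ case without double counting, and establishing the rank-two identity that collapses the cubic $S$-invariants to the single term $S_\a{}^\b{}_\g{}^\mu S_\b{}^\nu{}_\mu{}^\tau S_\nu{}^\a{}_\tau{}^\g$.
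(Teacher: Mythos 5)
Your enumeration scheme is essentially the paper's: your homogeneity grading is equivalent to the paper's count that the weight $(-3,-3)$ forces exactly three contractions with $h^{\a\ol\b}$, and your reductions invoke the same pseudo-Einstein identities \eqref{S-pE}, \eqref{A-div}, \eqref{V-pE}, \eqref{div-S}, \eqref{V-S}, \eqref{A-0}. (The only structural difference is that the paper eliminates $\nabla_0$ once and for all at the outset via the Ricci identity, rather than carrying it through the case analysis as you do; both are viable.) However, your attempt has a genuine gap precisely at the step you defer: the collapse of the cubic Chern--Moser invariants. A priori there are two complete contractions of three copies of $S_{\a\ol\b\g\ol\mu}$, the endomorphism trace $S_\a{}^\b{}_\g{}^\mu S_\b{}^\nu{}_\mu{}^\tau S_\nu{}^\a{}_\tau{}^\g$ and the twisted contraction $S_\a{}^\b{}_\g{}^\mu S_\b{}^\nu{}_\tau{}^\g S_\nu{}^\a{}_\mu{}^\tau$, which are not proportional in general dimension; if they remained independent for $n=2$, the stated list would be incomplete and the proposition false. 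Your proposed device --- viewing $S$ as a trace-free Hermitian endomorphism of ${\rm Sym}^2T^{1,0}M$ --- does not settle this: the symmetry group acting is $U(2)$ through its ${\rm Sym}^2$ representation, not $U(3)$, so the twisted contraction is a $U(2)$-invariant that is not automatically a polynomial in endomorphism traces; the input you need is exactly the rank-two identity you leave as an ``obstacle''. The paper disposes of it in one line: skew-symmetrization over three holomorphic indices vanishes in complex dimension two, so
\[
0=3!\,S_{[\a}{}^\b{}_{|\g|}{}^\mu S_\b{}^\nu{}_{|\mu|}{}^\tau S_{\nu]}{}^\a{}_\tau{}^\g
=S_\a{}^\b{}_\g{}^\mu S_\b{}^\nu{}_\mu{}^\tau S_\nu{}^\a{}_\tau{}^\g
+S_\nu{}^\b{}_\g{}^\mu S_\a{}^\nu{}_\mu{}^\tau S_\b{}^\a{}_\tau{}^\g,
\]
which is (after relabeling) the required linear relation between the two contractions.

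Two further points. First, in your $k=2$ case the hardest terms --- two factors of $S$ with two horizontal derivatives (the paper's Case 6) --- are dismissed in one sentence, but they require real work: one must split into the three possible contraction patterns according to where the contracted derivative index lands, commute derivatives by Ricci identities, use \eqref{V-S}, and recognize that the leftover Reeb term equals $-\tfrac{i}{2}\nabla_0|S_{\a\ol\b\g\ol\mu}|^2$, a divergence; this is exactly where additional generators could in principle appear, so it cannot be waved through. Second, your reason for discarding $k=1$ is false as stated: a single factor of $A_{\a\b}$ or $S_{\a\ol\b\g\ol\mu}$ \emph{can} be closed into a scalar of the correct weight (e.g.\ $\Delta_b\nabla^\a\nabla^\b A_{\a\b}$, or further contracted derivatives of $\nabla^{\ol\mu}S_{\a\ol\b\g\ol\mu}=-2iV_{\a\ol\b\g}$), and Hermitian symmetry is not required anyway since complex coefficients are allowed. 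The correct --- and simpler --- reason, which is how the paper discards its Cases 4 and 7, is that every complete contraction of covariant derivatives of a single tensor is a total divergence, because the Levi form is parallel and the outermost derivative can be pulled out of the contraction.
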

\begin{proof}
We first observe that we may assume that $F_\theta$ does not contain the index $0$ since we can eliminate the covariant derivative $\nabla_0$ by using the Ricci identities such as
\[
(\nabla_\a\nabla_{\ol\b}-\nabla_{\ol\b}\nabla_\a)B_\g{}^{\mu}
=-R_\g{}^\nu{}_{\a\ol\b}B_\nu{}^{\mu}+R_{\nu}{}^\mu{}_{\a\ol\b}B_\g{}^{\nu}-ih_{\a\ol\b}\nabla_0 B_\g{}^{\mu}
\]
for a pseudo-hermitian tensor $B_\g{}^{\mu}$.
Thus, $F_\theta$ is a linear combination of complete contractions of pseudo-hermitian tensors which are written in terms of 
\[
R_{\a}{}^\b{}_{\g\ol\mu}, \quad A_{\a\b}, \quad A_{\ol\a\ol\b}, \quad \nabla_\a, \quad \nabla_{\ol\a}.
\]
Note that these have CR weight $(0, 0)$. If we take trace with $h^{\a\ol\b}$ once, the CR weight changes by $(-1, -1)$. Since $F_\theta\in\calE(-3, -3)$, we must use $h^{\a\ol\b}$ exactly three times. Hence the possible combinations are the following:
\begin{align*}
&{\rm Case}\ 1.\quad A_{\a\b}, \quad A_{\ol\a\ol\b}, \quad R_\a{}^\b{}_{\g\ol\mu} \\
&{\rm Case}\ 2.\quad A_{\a\b}, \quad A_{\ol\a\ol\b}, \quad \nabla_\a, \quad \nabla_{\ol\a} \\
&{\rm Case}\ 3.\quad A_{\a\b}, \quad R_\a{}^\b{}_{\g\ol\mu}, \quad \nabla_{\ol\a}, \quad \nabla_{\ol\a} \quad {\rm and\ the\ complex\ conjugates}\\
&{\rm Case}\ 4.\quad A_{\a\b}, \quad \nabla_\a, \quad \nabla_{\ol\a}, \quad \nabla_{\ol\a}, \quad \nabla_{\ol\a}\quad {\rm and\ the\ complex\ conjugates}\\
&{\rm Case}\ 5.\quad R_\a{}^\b{}_{\g\ol\mu}, \quad R_\a{}^\b{}_{\g\ol\mu}, \quad R_\a{}^\b{}_{\g\ol\mu} \\
&{\rm Case}\ 6.\quad R_\a{}^\b{}_{\g\ol\mu}, \quad R_\a{}^\b{}_{\g\ol\mu}, \quad \nabla_\a, \quad \nabla_{\ol\a} \\
&{\rm Case}\ 7.\quad R_\a{}^\b{}_{\g\ol\mu}, \quad \nabla_\a, \quad \nabla_\a, \quad \nabla_{\ol\a}, \quad \nabla_{\ol\a}
\end{align*}
We can ignore Case 4 and Case 7 since they provide only divergence terms.
We will consider the other cases. In the sequel, we set
\[
\langle B, C, \cdots \rangle:={\rm span}_{\mathbb{C}}\{B, C, \cdots\}/({\rm divergence\ terms})
\]
for pseudo-hermitian invariants $B, C\cdots$. 
\bigskip

\underline{Case 1}.\ By \eqref{S-pE}, this case provides $S_{\a\ol\b\g\ol\mu}A^{\a\g}A^{\ol\b\ol\mu}$ and $P|A_{\a\b}|^2$.
\bigskip

\underline{Case 2}.\ Modulo divergence terms, we have the following two possibilities:
\begin{align}
\label{1-1} &{\rm contr}[(\nabla_{\ol\mu}A_{\a\b})(\nabla_\g A_{\ol\tau\ol\sigma})], \\
\label{1-2} &{\rm contr}[(\nabla_{\ol\mu}\nabla_\g A_{\a\b})A_{\ol\tau\ol\sigma}],
\end{align}
where ``contr'' means the complete contraction. We can express \eqref{1-1} in terms of $V_{\a\ol\b\g}, \nabla_\a P$ and their complex conjugates by using \eqref{V-pE}. Since $V_{\a\ol\b\g}$ is symmetric in $\a, \g$ and trace-free, we have
\[
{\rm contr}[(\nabla_{\ol\mu}A_{\a\b})(\nabla_\g A_{\ol\tau\ol\sigma})]\in
\langle |V_{\a\ol\b\g}|^2,\ |\nabla_\a P|^2\rangle
=\langle |V_{\a\ol\b\g}|^2,\ P\Delta_b P\rangle,
\]
where we have used $|\nabla_\a P|^2\equiv\frac{1}{2}P\Delta_b P$ modulo divergence. Thus, \eqref{1-1} appears in our list. By the Ricci identity, \eqref{1-2} can be rewritten as
\[
{\rm contr}[(\nabla_\g\nabla_{\ol\mu}A_{\a\b}+R_{\g\ol\mu}{}^\nu{}_\a A_{\nu\b}+R_{\g\ol\mu}{}^\nu{}_\b A_{\a\nu}+ih_{\g\ol\mu}\nabla_0 A_{\a\b})A_{\ol\tau\ol\sigma}].
\]
The first term is reduced to \eqref{1-1} modulo divergence while the second and the third term are reduced to Case 1. By \eqref{S-pE}, \eqref{V-pE} and \eqref{A-0}, the forth term gives
\[
i(\nabla_0 A_{\a\b})A^{\a\b}\in\langle P\Delta_b P,\ |V_{\a\ol\b\g}|^2,\ S_{\a\ol\b\g\ol\mu}A^{\a\g}A^{\ol\b\ol\mu},\ P|A_{\a\b}|^2 \rangle,
\]
which appears in the list.
\bigskip

\underline{Case 3}.\ We express $R_\a{}^\b{}_{\g\ol\mu}$ in terms of $S_\a{}^\b{}_{\g\ol\mu}$ and $P$. Modulo divergence, the term which involves $P$ gives $P\nabla^\a\nabla^\b A_{\a\b}$ and its complex conjugate, but this is a multiple of $P\Delta_b P$ modulo divergence by \eqref{A-div}. On the other hand, the terms which contain $S_\a{}^\b{}_{\g\ol\mu}$ are of the form
\[
{\rm contr}[(\nabla_{\ol\g}A_{\a\b})\nabla_{\ol\mu}S_{\nu\ol\tau\rho\ol\sigma}]
\]
or its complex conjugate modulo divergence. If the index $\ol\mu$ is contracted with $\a$ or $\b$, then it gives $0$ since $S_{\nu\ol\tau\rho\ol\sigma}$ must have an inner contraction. If the index $\ol\mu$ is contracted with $\nu$ or $\rho$, we obtain ${\rm contr}[(\nabla_{\ol\g}A_{\a\b})V_{\ol\tau \rho\ol\sigma}]$ by \eqref{div-S}, and this gives $|V_{\a\ol\b\g}|^2$ by \eqref{V-pE}.
\bigskip

\underline{Case 5}.\ We rewrite $R_\a{}^\b{}_{\g\ol\mu}$ by $S_\a{}^\b{}_{\g\ol\mu}$ and $P$. By the symmetries and trace-free property of $S_\a{}^\b{}_{\g\ol\mu}$, the resulting densities are $P^3$, $P|S_{\a\ol\b\g\ol\mu}|^2$, and 
\[
{\rm contr}[S_{\a_1\ol\b_1\g_1\ol\mu_1}S_{\a_2\ol\b_2\g_2\ol\mu_2}S_{\a_3\ol\b_3\g_3\ol\mu_3}].
\]
The last one gives 
\[
S_\a{}^\b{}_\g{}^\mu S_\b{}^\nu{}_\mu{}^\tau S_\nu{}^\a{}_\tau{}^\g, \quad 
S_\a{}^\b{}_\g{}^\mu S_\b{}^\nu{}_\tau{}^\g S_\nu{}^\a{}_\mu{}^\tau.
\]
However, on five dimensional CR manifolds, we have
\[
0=3!S_{[\a}{}^\b{}_{|\g|}{}^\mu S_\b{}^\nu{}_{|\mu|}{}^\tau S_{\nu]}{}^\a{}_\tau{}^\g
=S_\a{}^\b{}_\g{}^\mu S_\b{}^\nu{}_\mu{}^\tau S_\nu{}^\a{}_\tau{}^\g+
S_\nu{}^\b{}_\g{}^\mu S_\a{}^\nu{}_\mu{}^\tau S_\b{}^\a{}_\tau{}^\g,
\]
where $[\cdots]$ denotes the skew symmetrization over the indices $\a, \b, \nu$.
Hence we have only $S_\a{}^\b{}_\g{}^\mu S_\b{}^\nu{}_\mu{}^\tau S_\nu{}^\a{}_\tau{}^\g$ up to constant multiples.
\bigskip

\underline{Case 6}.\ We express $R_\a{}^\b{}_{\g\ol\mu}$ by $S_\a{}^\b{}_{\g\ol\mu}$ and $P$. A density made from $\{P, P, \nabla_\a, \nabla_{\ol\a} \}$ is only $P\Delta_b P$ modulo divergence. A density made from $\{P, S_{\a\ol\b\g\ol\mu}, \nabla_\a, \nabla_{\ol\a} \}$ is 0 since we must take trace of $S_{\a\ol\b\g\ol\mu}$. We will consider the densities made from $\{S_{\a\ol\b\g\ol\mu}, S_{\a\ol\b\g\ol\mu}, \nabla_\a, \nabla_{\ol\a}\}$. Up to divergence terms and the complex conjugates, these are of the form
\[
{\rm contr}[(\nabla_{\tau}\nabla_{\ol\sigma}S_{\a_1\ol\b_1\g_1\ol\mu_1})S_{\a_2\ol\b_2\g_2\ol\mu_2}].
\]
We devide the cases according to the index with which $\ol\sigma$ is contracted:
\begin{align}
\label{6-1} 
&{\rm contr}[(\nabla_{\tau}\nabla^{\a_1}S_{\a_1\ol\b_1\g_1\ol\mu_1})S_{\a_2\ol\b_2\g_2\ol\mu_2}],  \\
\label{6-2}
&{\rm contr}[(\nabla_{\tau}\nabla^{\a_2}S_{\a_1\ol\b_1\g_1\ol\mu_1})S_{\a_2\ol\b_2\g_2\ol\mu_2}], \\
\label{6-3}
&{\rm contr}[(\nabla_{\tau}\nabla^{\tau}S_{\a_1\ol\b_1\g_1\ol\mu_1})S_{\a_2\ol\b_2\g_2\ol\mu_2}].
\end{align}

By \eqref{div-S}, \eqref{6-1} gives 
\begin{equation}\label{nabla-V}
{\rm contr}[(\nabla_{\tau}
V_{\ol\b_1\g_1\ol\mu_1})S_{\a_2\ol\b_2\g_2\ol\mu_2}]\equiv
-{\rm contr}[V_{\ol\b_1\g_1\ol\mu_1}\nabla_{\tau}S_{\a_2\ol\b_2\g_2\ol\mu_2}].
\end{equation}
In the right-hand side, the index $\tau$ must be contracted with $\ol\beta_2$ or $\ol\mu_2$ since otherwise $S_{\a_2\ol\b_2\g_2\ol\mu_2}$ has an inner contraction. Hence this gives $|V_{\a\ol\b\g}|^2$.

Next we consider \eqref{6-2}. If we contract $\tau$ with $\ol\beta_2$ or $\ol\mu_2$, \eqref{6-2} vanishes since $S_{\a_1\ol\b_1\g_1\ol\mu_1}$ has an inner contraction. Hence $\tau$ is contracted with $\ol\beta_1$ or $\ol\mu_1$, and \eqref{6-2} gives only
\[
(\nabla_{\tau}\nabla^{\a_2}S_{\a_1}{}^\tau{}_{\g}{}^{\mu})S_{\a_2}{}^{\a_1}{}_{\mu}{}^{\g}.
\]
By the Ricci identity, this is computed as
\[
(\nabla^{\a_2}\nabla_{\tau}S_{\a_1}{}^\tau{}_{\g}{}^{\mu}
+R\# S-i\d_\tau{}^{\a_2}\nabla_0S_{\a_1}{}^\tau{}_{\g}{}^{\mu})S_{\a_2}{}^{\a_1}{}_{\mu}{}^{\g},
\]
where $R\# S$ denotes the terms obtained by the curvature action. The first term is the complex conjugate of \eqref{6-1}, and the second term is reduced to Case 5. The third term gives $\frac{-i}{2}\nabla_0 |S_{\a\ol\b\g\ol\mu}|^2=\frac{1}{4}(\nabla_\a\nabla^\a-\nabla^\a\nabla_\a)|S_{\a\ol\b\g\ol\mu}|^2$, which is a divergence.

We will consider \eqref{6-3}. The complex conjugate gives 
\[
(\nabla^\tau\nabla_\tau S_{\a\ol\b\g\ol\mu})S^{\ol\b\a\ol\mu\g}.
\]
By \eqref{V-S}, this is computed as
\[
\bigl(\nabla^\tau\nabla_\a S_{\tau\ol\b\g\ol\mu}+2i\nabla^\tau (V_{\g\ol\b[\tau}h_{\a]\ol\mu}+V_{\g\ol\mu[\tau}h_{\a]\ol\b})\bigr)S^{\ol\b\a\ol\mu\g}.
\]
The first term is reduced to the complex conjugate of \eqref{6-2}. The second and the third terms coincide with the complex conjugate of \eqref{nabla-V} and yield $|V_{\a\ol\b\g}|^2$.

Thus we complete the proof.
\end{proof}

Given a pseudo-hermitian invariant $F_\theta\in\calE(-3, -3)$, we can eliminate the terms $P^3$ and $|V_{\a\ol\b\g}|^2$ in $F_\theta$ by adding multiples of $Q'$ and $\mathcal{I}'$; see the formulas \eqref{Q-prime}, \eqref{I-prime}.
Hence, to prove Theorem \ref{main-thm}, it suffices to show that if an integral
\[
\int_M (c_1 P|A_{\a\b}|^2+c_2 P\Delta_b P+c_3 S_{\a\ol\b\g\ol\mu}A^{\a\g}A^{\ol\b\ol\mu}+c_4 P|S_{\a\ol\b\g\ol\mu}|^2)
\]
is a global secondary CR invariant of CR five-manifolds then $c_1=c_2=c_3=c_4=0$. We prove this fact by considering two examples of CR manifold.
%%%%%%%%%%%%%%%%%%%%%%%%%%%%%%%%%%%%%%%%%%%%%%%%%%%%%%%%%
\subsection{The CR sphere} First we deal with the CR sphere 
\[
S^5=\Bigl\{(z^\a, w)\in\mathbb{C}^3\ \Big|\ \sum_{\a=1}^2 |z^\a|^2+|w|^2=1\Bigr\},
\]
on which $S_{\a\ol\b\g\ol\mu}=0$. We will prove that if
\begin{equation}\label{inv-sphere}
\int_{S^5} (c_1 P|A_{\a\b}|^2+c_2 P\Delta_b P)
\end{equation}
is independent of choice of pseudo-Einstein contact form, then $c_1=c_2=0$.

We first compute the Graham--Lee connection for the defining function $\rho:=1-\sum|z^\a|^2-|w|^2$. The $(1, 0)$-vector field
\[
\xi=\frac{1}{\rho-1}\Bigl(z^\a\frac{\pa}{\pa z^\a}+w\frac{\pa}{\pa w}\Bigr)
\]
satisfies \eqref{xi-def} with the transverse curvature $\kappa=(1-\rho)^{-1}$.
We take a Graham--Lee frame $\{Z_\a, \xi\}$ and its dual coframe $\{\theta^\a, \pa\rho\}$ as
\[
Z_\a=\frac{\pa}{\pa z^\a}-\frac{z^{\ol\a}}{\ol w}\frac{\pa}{\pa w}, \quad 
\theta^\a=dz^\a+\frac{z^\a}{1-\rho}\pa\rho.
\]
Then, the Levi form $h_{\a\ol\b}=-\pa\ol\pa\rho(Z_\a, Z_{\ol\b})$ and its inverse are given by
\[
h_{\a\ol\b}=\d_{\a\ol\b}+\frac{z^{\ol\a}z^\b}{|w|^2}, \quad 
h^{\a\ol\b}=\d^{\a\ol\b}-\frac{z^\a z^{\ol\b}}{1-\rho}.
\]
If we set
\[
\varphi_\b{}^\a=\frac{1}{1-\r}z^\a h_{\b\ol\g}\th^{\ol\g}+\frac{1}{2(1-\rho)}(\pa\rho-\ol\pa\rho)\d_\b{}^\a, \quad A^\a{}_{\ol\b}=0,
\]
then these satisfy the structure equations \eqref{str-GL}. Hence the Tanaka--Webster connection and the torsion tensor for the standard contact form $\theta=(i/2)(\pa\rho-\ol\pa\rho)|_{TS^5}$ are
\[
\omega_\b{}^\a=\varphi_\b{}^\a|_{TS^5}=z^\a h_{\b\ol\g}\th^{\ol\g}-i\theta \d_\b{}^\a, \quad A_{\a\b}=0.
\]
The Tanaka--Webster curvature tensor is given by
\[
R_{\a\ol\b\g\ol\mu}=h_{\a\ol\b}h_{\g\ol\mu}+h_{\g\ol\b}h_{\a\ol\mu}.
\]
In particular, we have $P=1$. In the sequel, we trivialize the CR density by $\theta$.

Let $\widehat\theta=e^\U\theta$ be another pseudo-Einstein contact form. Then, by \eqref{P-trans} and \eqref{sublap-trans}, we have
\begin{equation}\label{Delta-P-trans}
\widehat\Delta_b \widehat P=\frac{1}{2}\Delta_b^2 \U-\Delta_b \U-\Delta_b(\U_\a \U^\a)-\frac{1}{2}(\Delta_b \U)^2+2\U_\a\U^\a+O(3),
\end{equation}
where $O(3)$ denotes terms of order greater than or equal to 3 in $\U$. We consider a one parameter family of pseudo-Einstein contact forms $e^{\varepsilon\U}\theta\ (\varepsilon\in\mathbb{R})$ and set
\[
\d^k:=\frac{\pa^k}{\pa \varepsilon^k}\Big|_{\varepsilon=0}.
\]
By \eqref{A-trans}, \eqref{P-trans}, and \eqref{Delta-P-trans}, we have
\begin{gather*}
\d A_{\a\b}=i\U_{\a\b}, \quad \d P=\frac{1}{2}\Delta_b \U, \quad 
\d (\Delta_b P)=\frac{1}{2}\Delta_b^2 \U-\Delta_b \U, \\ 
\frac{1}{2}\d^2 (\Delta_b P)\equiv -\frac{1}{2}(\Delta_b \U)^2+2\U_\a\U^\a\ {\rm mod\ divergence},
\end{gather*}
and hence
\begin{align*}
\frac{1}{2}\d^2(P|A_{\a\b}|^2)&=(\d A_{\a\b})( \d A^{\a\b})=\U_{\a\b}\U^{\a\b}, \\
\frac{1}{2}\d^2(P\Delta_b P)&=\d P\d(\Delta_b P)+\frac{1}{2}\d^2(\Delta_b P)\equiv \frac{1}{4}\Delta_b \U \Delta_b^2 \U-(\Delta_b \U)^2+2\U_\a\U^\a
\end{align*}
modulo divergence. Thus, if the integral \eqref{inv-sphere} is a global secondary CR invariant of $S^5$, we have
\begin{equation}\label{int-U-sphere}
\int_{S^5} \Bigl[c_1 \U_{\a\b}\U^{\a\b}+c_2\Bigl(\frac{1}{4}\Delta_b \U \Delta_b^2 \U-(\Delta_b \U)^2+2\U_\a\U^\a\Bigr)\Bigr]\theta\wedge(d\theta)^2=0
\end{equation}
for any CR pluriharmonic function $\U$.

Let us consider the CR pluriharmonic functions
\[
\U:=\frac{1}{m}\bigl((z^1)^m+(z^{\ol1})^m\bigr)\big|_{S^5}, \quad m=1, 2, \cdots.
\]
For simplicity, we do not refer to the restriction to $S^5$ in the sequel. Since $\omega_\a{}^\b(Z_\g)=0$ in the frame $\{Z_\a\}$, we have
\[
\U_1 =(z^1)^{m-1}, \quad \U_{11}=Z_1 Z_1\U=(m-1)(z^1)^{m-2}
\]
and the other components in $\U_\a, \U_{\a\b}$ are $0$. Thus, 
\begin{align*}
\U_\a\U^\a&=h^{1\ol1}\U_1\U_{\ol1}=(1-|z^1|^2)|z^1|^{2(m-1)}, \\
\U_{\a\b}\U^{\a\b}&=h^{1\ol1}h^{1\ol1}\U_{11}\U_{\ol1\ol1}=(m-1)^2(1-|z^1|^2)^2|z^1|^{2(m-2)}.
\end{align*}
From
\[
\U_{\a\ol\b}=Z_{\ol\b}Z_\a\U-\omega_\a{}^\g(Z_{\ol\b})\U_\g=-(z^1)^m h_{\a\ol\b},
\]
we have
\[
\Delta_b\U=2m\U, \quad \Delta_b^2\U=4m^2\U.
\]
It follows that
\begin{align*}
&\frac{1}{4}\Delta_b \U \Delta_b^2 \U-(\Delta_b \U)^2+2\U_\a\U^\a \\
&\quad =2(2m-5)|z^1|^{2m}+2|z^1|^{2(m-1)}+2(m-2)\bigl((z^1)^{2m}+(z^{\ol1})^{2m}\bigr).
\end{align*}
In the integration \eqref{int-U-sphere}, we may replace $\theta\wedge(d\theta)^2$ by the standard volume form $dV$ since the former is a multiple of the latter. We first note that 
\[
\int_{S^5} (z^1)^{2m}dV=0
\]
holds since the integral changes sign by the rotation $z^1\mapsto e^{i\pi/(2m)}z^1$. We will compute the integral of $|z^1|^{2k}$ by using the polar coordinates of $S^5$:
\begin{align*}
&\cos\theta_1 \\
&\sin\theta_1\cos\theta_2 \\
&\sin\theta_1\sin\theta_2\cos\theta_3 \\
&\sin\theta_1\sin\theta_2\sin\theta_3 \cos\theta_4 \\
&\sin\theta_1\sin\theta_2\sin\theta_3\sin\theta_4\cos\theta_5 \\
&\sin\theta_1\sin\theta_2\sin\theta_3\sin\theta_4\sin\theta_5, 
\quad 0\le\theta_1, \theta_2, \theta_3, \theta_4\le \pi, \ 0\le\theta_5\le2\pi.
\end{align*}
We let ${\rm Re}\, z^1, {\rm Im}\,z^1$ be the last two coordinates. Since the volume is given by
\[
dV=\sin^4\theta_1\sin^3\theta_2\sin^2\theta_3\sin\theta_4 d\theta_1d\theta_2d\theta_3d\theta_4d\theta_5,
\]
we have
\begin{align*}
\int_{S^5} |z^1|^{2k} dV&=\int \sin^{2k+4}\theta_1\sin^{2k+3}\theta_2\sin^{2k+2}\theta_3\sin^{2k+1}\theta_4 d\theta_1d\theta_2d\theta_3d\theta_4d\theta_5 \\
&=\frac{2\pi^3}{(k+1)(k+2)}.
\end{align*}
Here we have used the formula
\begin{equation*}
\int_0^{\frac{\pi}{2}} \sin^n\theta d\theta=
\begin{cases}
\frac{(n-1)!!}{n!!} & (n: {\rm odd}) \\
\frac{(n-1)!!}{n!!}\cdot \frac{\pi}{2} & (n: {\rm even}).
\end{cases}
\end{equation*}
By computing with these formulas, we obtain
\begin{align*}
\frac{1}{2\pi^3}\int_{S^5} \U_{\a\b}\U^{\a\b}dV&=\frac{6(m-1)}{m(m+1)(m+2)}, \\
\frac{1}{2\pi^3}\int_{S^5}\Bigl(\frac{1}{4}\Delta_b \U \Delta_b^2 \U-(\Delta_b \U)^2+2\U_\a\U^\a\Bigr)dV&=\frac{4(m-1)^2}{m(m+1)(m+2)}.
\end{align*}
It follows from \eqref{int-U-sphere} that 
\[
3c_1+2(m-1)c_2=0
\]
 for $m=2, 3, \cdots$ and hence $c_1=c_2=0$.
%%%%%%%%%%%%%%%%%%%%%%%%%%%%%%%%%%%%%%%%%%%%%%%%%%%%%%%%%%%
\subsection{A Reinhardt hypersurface}\label{reinhardt} We next consider the boundary of the Reinhardt domain
\[
\Omega:=\Bigl\{(\zeta^0, \zeta^1, \zeta^2)\in\mathbb{C}^3\ \Big|\ \sum_{i=0}^2(\log|\zeta^i|)^2<1\Bigr\}.
\]
Pseudo-hermitian geometry of $\pa\Omega$ is examined in \cite{Mar1}, and higher dimensional cases are discussed in \cite{CT}. By the coordinate change $z^i=\log\zeta^i$, the boundary of $\Omega$ is mapped to
\[
M=\Bigl\{(z^0, z^1, z^2)\in\mathbb{C}^3\ \Big|\ \sum_{i=0}^2(x^i)^2=1\Bigr\}\Big/(2\pi i\mathbb{Z})^3,
\]
where $z^j=x^j+iy^j$. We set $w=z^0, (z^\a)=(z^1, z^2)$. We will show that if 
\begin{equation*}
\int_M (c_3 S_{\a\ol\b\g\ol\mu}A^{\a\g}A^{\ol\b\ol\mu}+c_4 P|S_{\a\ol\b\g\ol\mu}|^2)
\end{equation*}
is a global secondary CR invariant of $M$, then $c_3=c_4=0$.

We take the defining function 
\[
\rho:=2\bigl(1-(x^0)^2-\textstyle \sum(x^\a)^2\bigr)
\]
and trivialize CR densities by the associated contact form $\theta=(i/2)(\pa\rho-\ol\pa\rho)|_{TM}$. We will compute the Graham--Lee connection for $\rho$. We define a $(1, 0)$-vector field $\xi$ by
\[
\xi=\frac{x^i}{\rho-2}\frac{\pa}{\pa z^i}.
\]
Then $\xi$ satisfies \eqref{xi-def} with the transverse curvature $\kappa=(2(2-\rho))^{-1}$. We take the Graham--Lee frame $\{Z_\a, \xi\}$ and the coframe $\{\theta^\a, \pa\rho\}$ defined by 
\[
Z_\a:=\frac{\pa}{\pa z^\a}-\frac{x^\a}{x^0}\frac{\pa}{\pa w}, \quad 
\theta^\a:=dz^\a+\frac{x^\a}{2-\rho}\pa\rho.
\]
Then the Levi form $h_{\a\ol\b}=-\pa\ol\pa\rho(Z_\a, Z_{\ol\b})$ and its inverse are given by
\[
h_{\a\ol\b}=\d_{\a\ol\b}+\frac{x^\a x^\b}{(x^0)^2}, \quad
h^{\a\ol\b}=\d^{\a\ol\b}-\frac{2x^\a x^\b}{2-\rho}.
\]
If we set
\[
\varphi_\b{}^\a:=\frac{1}{2-\rho}x^\a h_{\b\ol\g}(\th^\g+\th^{\ol\g})+\frac{1}{4(2-\rho)}(\pa\rho-\ol\pa\rho)\d_\b{}^\a, \quad 
A^\a{}_{\ol\b}:=\frac{i}{2(2-\rho)}\d_{\ol\b}{}^\a,
\]
then these satisfy the structure equations \eqref{str-GL}. Hence the Tanaka--Webster connection and the torsion tensor for $\theta$ are given by
\[
\omega_\b{}^\a=\varphi_\b{}^\a|_{TM}=\frac{1}{2}x^\a h_{\b\ol\g}(\th^\g+\th^{\ol\g})-\frac{i}{4}\th\d_\b{}^\a, \quad A_{\a\b}=-\frac{i}{4}h_{\a\ol\b}.
\]
The Tanaka--Webster curvature tensor satisfies
\[
R_\b{}^\a{}_{\g\ol\mu}=\frac{1}{4}(\d_\b{}^\a h_{\g\ol\mu}+\d_\g{}^\a h_{\b\ol\mu}-16A^\a{}_{\ol\mu}A_{\g\b}).
\]
It follows that $\theta$ is pseudo-Einstein and 
\begin{equation*}
S_{\a\ol\b\g\ol\mu}=\frac{1}{12}(h_{\a\ol\b}h_{\g\ol\mu}+h_{\g\ol\b}h_{\a\ol\mu})-4A_{\a\g}A_{\ol\b\ol\mu}.
\end{equation*}
We remark that from this formula one can check that the local CR invariant 
\[
S_\a{}^\b{}_\g{}^\mu S_\b{}^\nu{}_\mu{}^\tau S_\nu{}^\a{}_\tau{}^\g
\]
in Theorem \ref{main-thm} is a nonzero constant function on $M$. 

Let us consider the one parameter family of pseudo-Einstein contact forms
$e^{\varepsilon\U}\theta$, where 
\[
\U:=2x^1=z^1+z^{\ol1}
\]
is a well-defined CR pluriharmonic function on $M$. Since $\U_\a=\d_\a{}^1$, we have
\[
\U_\a\U^\a=h^{1\ol1}=1-(x^1)^2, \quad A_{\a\b}\U^\a\U^\b=h^{\a\ol1}h^{\b\ol1}A_{\a\b}=-\frac{i}{4}\bigl(1-(x^1)^2\bigr).
\]
By using these formulas and \eqref{A-trans}, we compute
\begin{align*}
\frac{1}{24}\d^4(S_{\a\ol\b\g\ol\mu}A^{\a\g}A^{\ol\b\ol\mu})
&=\frac{1}{4}S_{\a\ol\b\g\ol\mu}\d^2(A^{\a\g})\d^2(A^{\ol\b\ol\mu}) \\
&=S_{\a\ol\b\g\ol\mu}\U^\a\U^\g\U^{\ol\b}\U^{\ol\mu} \\
&=\frac{1}{6}(\U_\a\U^\a)^2-4|A_{\a\b}\U^\a\U^\b|^2 \\
&=-\frac{1}{12}\bigl(1-(x^1)^2\bigr)^2.
\end{align*}
On the other hand, $\d^4(P|S_{\a\ol\b\g\ol\mu}|^2)=0$ by \eqref{P-trans}. Thus we have
\[
c_3\int_M  \bigl(1-(x^1)^2\bigr)^2 \theta\wedge(d\theta)^2=0,
\]
which implies $c_3=0$. It then follows from
\[
\frac{1}{2}\d^2(P|S_{\a\ol\b\g\ol\mu}|^2)=-|S_{\a\ol\b\g\ol\mu}|^2\U_\a\U^\a=
-\frac{1}{6}\bigl(1-(x^1)^2\bigr)
\]
that
\[
c_4\int_M  \bigl(1-(x^1)^2\bigr) \theta\wedge(d\theta)^2=0,
\]
which implies $c_4=0$.

Thus we complete the proof of Theorem \ref{main-thm}.

%%{\cite[Proposition 2.6]{HMM}}

\end{document}